\title{On a vanishing theorem for surfaces}
\author{Osamu Fujino and Nao Moriyama}
\date{2025/11/27, version 0.01}
\subjclass[2020]{Primary 32C15; Secondary 14E30}
\keywords{vanishing theorems, algebraic surfaces, 
complex analytic surfaces, 
minimal model program}
\address{Department of 
Mathematics, Graduate School of Science, 
Kyoto University, Kyoto 606-8502, Japan}
\email{fujino@math.kyoto-u.ac.jp}
\address{Department of Mathematics, Graduate 
School of Science, Kyoto University, Kyoto 606-8502, Japan}
\email{moriyama.nao.22s@st.kyoto-u.ac.jp}
\DeclareMathOperator{\Supp}{Supp}
\newtheorem{thm}{Theorem}[section]
\newtheorem{lem}[thm]{Lemma}
\theoremstyle{definition}
\newtheorem{defn}[thm]{Definition}
\newtheorem*{ack}{Acknowledgments}
\begin{document}

\begin{abstract}
We propose a new formulation of a vanishing 
theorem for surfaces. Although this vanishing theorem 
follows easily from the well-known Kawamata--Viehweg 
vanishing theorem, it turns out to be remarkably useful. 
In particular, it is sufficient for the minimal model 
theory of log surfaces, and it allows one to carry 
out both the minimal model program and the abundance 
theorem for log surfaces without invoking any of the deeper vanishing theorems.
\end{abstract}

\maketitle 


\section{Introduction}\label{a-sec1}

This short paper aims to make the minimal model theory for log surfaces, 
as established in \cite{fujino2} and \cite{moriyama}, more accessible. 
In \cite{fujino2}, we develop the minimal model theory for algebraic 
log surfaces in characteristic zero, building on the framework 
from \cite{fujino1}. In contrast, \cite{moriyama} addresses complex analytic 
log surfaces within the framework of \cite{fujino5}. Both \cite{fujino2} 
and \cite{moriyama} rely on intricate vanishing theorems derived 
from \cite{fujino1} and \cite{fujino6}.
In this paper, we demonstrate that the vanishing theorem presented 
in Theorem \ref{a-thm1.1} follows easily from 
the well-known Kawamata--Viehweg vanishing theorem. 
This result shows that the Kawamata--Viehweg vanishing theorem 
is sufficient for the minimal model theory of log surfaces 
discussed in \cite{fujino2} and \cite{moriyama}. Consequently, 
readers primarily interested in the surface theory will be 
able to avoid the more complicated vanishing theorems 
of \cite{fujino1}, \cite{fujino5}, and \cite{fujino6}.

\begin{thm}[Main Theorem]\label{a-thm1.1}
Let $X$ be a smooth complex analytic surface and let $\Delta$ be a boundary 
$\mathbb{R}$-divisor on $X$ such that $\Supp \Delta$ is a simple normal crossing divisor. 
Let $f\colon X \to Y$ be a bimeromorphic morphism to a 
complex analytic space $Y$, and let $\pi\colon Y \to Z$ be a proper 
surjective morphism of complex analytic spaces such that 
$\psi:=\pi\circ f$ is projective. 
Let $\mathcal{L}$ be a line bundle on $X$ such that 
$\mathcal{L}-(\omega_X+\Delta)$ is $\psi$-nef 
and $\psi$-big over $Z$. 
Assume that $\bigl(\mathcal{L}-(\omega_X+\Delta)\bigr)\cdot C>0$ for every 
irreducible component $C$ of $\lfloor \Delta\rfloor$ such that 
$f(C)$ is a curve and $\psi(C)$ is a point. 
Then 
\[
R^p\pi_* R^q f_* \mathcal{L}=0
\] 
for every $p>0$ and every $q$. 
\end{thm}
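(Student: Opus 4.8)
\emph{Setup and reduction.} Since $f$ is a proper bimeromorphic morphism of surfaces, its fibres have dimension at most one, so $R^qf_*\mathcal G=0$ for $q\ge 2$, and for any coherent sheaf $\mathcal G$ on $X$ the sheaf $R^1f_*\mathcal G$ is supported on the discrete subset $\{y\in Y:\dim f^{-1}(y)\ge 1\}$ of $Y$. As $\pi$ is finite on this set, $R^p\pi_*R^qf_*\mathcal L=0$ automatically for all $p\ge 1$ and all $q\ge 1$. Together with $R^p\pi_*=0$ for $p\ge 3$, this makes the Leray spectral sequence $R^p\pi_*R^qf_*\mathcal L\Rightarrow R^{p+q}\psi_*\mathcal L$ collapse to the exact sequence
\[
0\to R^1\pi_*f_*\mathcal L\to R^1\psi_*\mathcal L\to \pi_*R^1f_*\mathcal L\to R^2\pi_*f_*\mathcal L\to R^2\psi_*\mathcal L\to 0 ,
\]
so the theorem is equivalent to the two assertions that $R^2\psi_*\mathcal L=0$ and that the edge homomorphism $R^1\psi_*\mathcal L\to\pi_*R^1f_*\mathcal L$ is an isomorphism. (Note that $R^1\psi_*\mathcal L$ need not vanish, so one really does need the isomorphism and not merely $R^{>0}\psi_*\mathcal L=0$.) I would prove both assertions by induction on the number of irreducible components of $\lfloor\Delta\rfloor$.

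\emph{Base case $\lfloor\Delta\rfloor=0$.} Then $(X,\Delta)$ is klt, and since every $f$-exceptional curve is contracted by $\psi$, the $\mathbb R$-divisor $\mathcal L-(\omega_X+\Delta)$ is $f$-nef as well as $\psi$-nef and $\psi$-big. Relative Kawamata--Viehweg vanishing for the (locally projective) bimeromorphic morphism $f$ gives $R^qf_*\mathcal L=0$ for $q>0$, whence $Rf_*\mathcal L=f_*\mathcal L$ and $R^p\pi_*f_*\mathcal L=R^p\psi_*\mathcal L$; the relative Kawamata--Viehweg vanishing theorem for the projective morphism $\psi$ then gives $R^p\psi_*\mathcal L=0$ for $p>0$, which is exactly what is needed.

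\emph{Inductive step.} Pick an irreducible component $C$ of $\lfloor\Delta\rfloor$; it is a smooth curve, and $\mathcal L-(\omega_X+\Delta)=(\mathcal L-C)-(\omega_X+(\Delta-C))$, with $\Delta-C$ again a boundary with simple normal crossing support having one fewer component in its integral part, on which the positivity hypothesis is unchanged. So the inductive hypothesis applies to $\mathcal O_X(\mathcal L-C)$, and it remains to feed
\[
0\to\mathcal O_X(\mathcal L-C)\to\mathcal O_X(\mathcal L)\to\mathcal L|_C\to 0
\]
through $f$ and $\pi$. Everything coming from $\mathcal L|_C$ is supported on $f(C)$ --- a curve along which $f|_C$ is finite, or a point to which $\mathcal L|_C$ is pushed forward as a skyscraper complex --- so a case check, using that a curve has no cohomology in degrees $\ge 2$ and that $R^p\pi_*=0$ for $p\ge 3$, reduces the contribution of $\mathcal L|_C$ to the single vanishing $H^1(C,\mathcal L|_C)=0$ in the case where $f(C)$ is a curve and $\psi(C)$ is a point. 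This is exactly where the hypothesis enters: by adjunction
\[
\deg\bigl(\mathcal L|_C\bigr)=\bigl(2g(C)-2\bigr)+(\Delta-C)\cdot C+\bigl(\mathcal L-(\omega_X+\Delta)\bigr)\cdot C ,
\]
and for such $C$ the last term is $>0$ by hypothesis while $(\Delta-C)\cdot C\ge 0$, so $\deg(\mathcal L|_C)>2g(C)-2$ and $H^1(C,\mathcal L|_C)=0$. A diagram chase combining the long exact sequences of $R\psi_*$, of $Rf_*$, and of $\pi_*R^1f_*$ attached to the displayed sequence then transports the two assertions from $\mathcal O_X(\mathcal L-C)$ to $\mathcal O_X(\mathcal L)$.

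\emph{Main obstacle.} The only genuine input is classical (relative) Kawamata--Viehweg vanishing, applied both to the projective morphism $\psi$ and to the bimeromorphic morphism $f$; the delicate part is purely homological, namely organizing the inductive step so that it is the \emph{isomorphism} of edge homomorphisms --- not a bare vanishing, which is false here --- that is inherited along the short exact sequences, while keeping the two spectral sequences (Leray for $f$ and for $\pi\circ f$) aligned. A secondary point is to confirm that relative Kawamata--Viehweg vanishing is legitimately available for the possibly non-(globally-)projective bimeromorphic morphism $f$ in the complex analytic category, which for a bimeromorphic morphism of surfaces is elementary.
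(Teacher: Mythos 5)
Your reductions are fine: the automatic vanishing of $R^p\pi_*R^qf_*\mathcal L$ for $p\ge 1$, $q\ge 1$, the equivalence with the two assertions about $R^2\psi_*\mathcal L$ and the edge map, the base case via Kawamata--Viehweg applied to $f$ and to $\psi$, and the adjunction/degree computation giving $H^1(C,\mathcal L|_C)=0$ for a component with $f(C)$ a curve and $\psi(C)$ a point are all correct and are essentially the content of the paper's Lemmas 3.1 and 3.2. The gap is in the inductive step, at the point where you claim the contribution of $\mathcal L|_C$ ``reduces to the single vanishing $H^1(C,\mathcal L|_C)=0$.'' Pushing $0\to\mathcal L(-C)\to\mathcal L\to\mathcal L|_C\to 0$ through $f$ is not right exact: there is a connecting map $\delta\colon f_*(\mathcal L|_C)\to R^1f_*(\mathcal L(-C))$, and your induction hypothesis controls only $R^p\pi_*R^qf_*$ of $\mathcal L(-C)$, not $R^1f_*(\mathcal L(-C))$ itself, which can be nonzero as long as $\lfloor\Delta\rfloor-C$ still contains $f$-exceptional components. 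What your chase actually requires is $R^1\pi_*Q=0$ with $Q=\ker\delta\subseteq f_*(\mathcal L|_C)$; from $0\to Q\to f_*(\mathcal L|_C)\to\mathrm{im}\,\delta\to 0$ the vanishing $H^1(C,\mathcal L|_C)=0$ only identifies $R^1\pi_*Q$ with the cokernel of $\pi_*f_*(\mathcal L|_C)\to\pi_*(\mathrm{im}\,\delta)$, and that surjectivity is not formal: $\mathrm{im}\,\delta$ is a skyscraper, so germs of $\mathcal L|_C$ near each supporting point do surject onto it, but in the relevant case $C$ is a compact curve contracted by $\psi$ and you need sections over all of $C$ to surject. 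In your edge-map language, the same group obstructs the injectivity of the natural map $\mathrm{coker}(\pi_*\delta)\to\pi_*\mathrm{coker}(\delta)$, so the ``diagram chase combining the long exact sequences'' does not close by itself; it silently assumes $\delta=0$.

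The paper avoids this by not peeling off one component at a time: it subtracts in a single step the union $B$ of \emph{all} components of $\lfloor\Delta\rfloor$ contracted by $f$. Then the Reid--Fukuda-type vanishing (Lemma 3.1) applied to the bimeromorphic morphism $f$ with boundary $\Delta-B$ has a vacuous positivity hypothesis (no component of $\lfloor\Delta-B\rfloor$ is $f$-exceptional), so $R^if_*(\mathcal L(-B))=0$ for $i>0$; this makes $0\to f_*\mathcal L(-B)\to f_*\mathcal L\to f_*(\mathcal L|_B)\to 0$ exact with skyscraper quotient, and then Leray plus Lemma 3.1 applied to $\psi$ and $\mathcal L(-B)$ (this is where your hypothesis on components with $f(C)$ a curve and $\psi(C)$ a point enters) gives $R^p\pi_*f_*\mathcal L=0$. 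You can repair your induction in the same spirit: either remove the $f$-exceptional components of $\lfloor\Delta\rfloor$ first and strengthen the inductive statement to include $R^qf_*$-vanishing of the twisted bundle (so that $\delta=0$ at every later step), or simply adopt the one-shot subtraction of $B$; as written, the arbitrary-order induction with only the $R^p\pi_*R^qf_*$ hypothesis does not go through.
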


Theorem \ref{a-thm1.1} 
is not included in the vanishing theorems stated 
in \cite{fujino1}, \cite[Chapter 5]{fujino3}, \cite{fujino5}, or \cite{fujino6}, 
and, to the best of our knowledge, it appears to be a new formulation. 
By Theorem \ref{a-thm1.1}, we can formulate and 
prove the Kodaira vanishing theorem, the vanishing 
theorem of Reid--Fukuda type, and 
related results for log canonical surfaces 
in both the algebraic and complex analytic 
settings, without resorting to the intricate 
vanishing theorems in \cite[Chapter~5]{fujino3}, 
\cite{fujino6}, or \cite{fujino-fujisawa}. We leave 
the details to the interested 
reader; see \cite[Theorems~5.6.4 and 5.7.6]{fujino3} for further discussion.
 
\medskip

The vanishing theorem in \cite{fujino1} is proved using the theory of 
mixed Hodge structures on cohomology with compact support.  
The work in \cite{fujino5} relies on Saito's theory of mixed Hodge modules,  
while \cite{fujino-fujisawa} provides an alternative approach to the results 
of \cite{fujino5}.  
As indicated by its title, \cite{fujino-fujisawa} employs the theory of 
variations of mixed Hodge structures.  

In contrast, in this paper we only make use of the (relative) 
Kawamata--Viehweg vanishing theorem.  
Although its validity for projective morphisms between complex analytic spaces 
is not entirely obvious, it is widely accepted as a standard result by experts.  
For readers interested exclusively in algebraic varieties,  
everything can be assumed algebraic, and the classical Kawamata--Viehweg vanishing 
can be applied throughout this paper.  

We note that the formulation of Theorem~\ref{a-thm1.1} is new,  
even in the purely algebraic setting.

\medskip

Before proceeding, let us briefly outline the structure of this short paper. 
Section~\ref{a-sec2} collects the preliminaries: we recall several 
basic definitions
and state the Kawamata--Viehweg vanishing theorem for projective morphisms 
between complex analytic spaces without proof. 
In Section~\ref{a-sec3}, we prove two results that follow rather 
easily from the Kawamata--Viehweg vanishing theorem. 
Section~\ref{a-sec4} is devoted to the proof of Theorem~\ref{a-thm1.1}. 
Finally, in Section~\ref{a-sec5}, we explain, for each relevant paper, that 
the minimal model theory for log surfaces can be carried out using only 
the Kawamata--Viehweg vanishing theorem in light of the results established here.

\section{Preliminaries}\label{a-sec2}

In this paper, we freely use the standard notation and definitions 
of the minimal model theory for algebraic varieties from \cite{fujino1}, 
and for projective morphisms of complex analytic spaces from \cite{fujino5}.

\begin{defn}[Log surfaces]\label{a-def2.1}
Let \( X \) be a normal complex analytic surface, and let \( \Delta \) be a boundary 
\( \mathbb{R} \)-divisor on \( X \) such that \( K_X + \Delta \) is \( \mathbb{R} \)-Cartier. 
Then the pair \( (X, \Delta) \) is called a \emph{log surface}. We recall that an 
\( \mathbb{R} \)-divisor is said to be a {\em{boundary \( \mathbb{R} \)-divisor}} if all its 
coefficients lie in the interval \( [0,1] \).
\end{defn}
 
We discuss in \cite{fujino2} and \cite{moriyama} that 
the minimal model program and the abundance conjecture 
hold for $\mathbb Q$-factorial 
log surfaces \((X, \Delta)\) without assuming that they are necessarily log canonical.

\begin{defn}[Nefness]\label{a-def2.2}
Let \( f \colon X \to Y \) be a projective morphism 
between complex analytic spaces, 
and let \( \mathcal{L} \) be an \( \mathbb{R} \)-line bundle, 
or the sum of an \( \mathbb{R} \)-line bundle 
and an \( \mathbb{R} \)-Cartier divisor, on \( X \). We 
say that \( \mathcal{L} \) is \emph{\( f \)-nef over \( Y \)} if 
\[
\mathcal{L} \cdot C \ge 0
\]
for every curve \( C \subset X \) such that \( f(C) \) is a point.
\end{defn}

There are various characterizations of relative bigness. 
The following one is taken from \cite[Chapter~II, 5.17.~Corollary]{nakayama2}.

\begin{defn}[Bigness]\label{a-def2.3}
Let \( f \colon X \to Y \) be a projective surjective 
morphism from a normal 
complex analytic space \( X \). Let \( \mathcal{L} \) be 
an \( \mathbb{R} \)-line bundle, 
or the sum of an \( \mathbb{R} \)-line bundle and an \( \mathbb{R} \)-Cartier 
divisor, on \( X \). 
We say that \( \mathcal{L} \) is \emph{\( f \)-big over \( Y \)} if, 
for any irreducible 
component \( F \) of an analytically sufficiently general fiber of \( f \), 
the restriction 
\( \mathcal{L}|_F \) is big in the usual sense.
\end{defn}

In this paper, we will freely use the following 
relative Kawamata--Viehweg vanishing theorem for 
projective morphisms between complex analytic spaces. 

\begin{thm}[Relative 
Kawamata--Viehweg vanishing theorem]\label{a-thm2.4}
Let $f\colon X\to Y$ be a 
projective surjective morphism from a smooth 
complex analytic space $X$ and let $\Delta$ be an $\mathbb R$-divisor 
on $X$ such that $\lfloor \Delta\rfloor =0$ and $\Supp \Delta$ is a 
simple normal crossing divisor. 
Let $\mathcal L$ be a line bundle on $X$ such that 
$\mathcal L-(\omega_X+\Delta)$ is $f$-nef and $f$-big over $Y$. 
Then $R^if_*\mathcal L=0$ holds for every $i>0$. 
\end{thm}

For the details of Theorem \ref{a-thm2.4}, see, 
for example, \cite[Theorem 3.7]{nakayama1} and 
\cite[Chapter II. 5.12.~Corollary]{nakayama2}. 

\section{Lemmas}\label{a-sec3}

We begin with a vanishing theorem of Reid--Fukuda type for surfaces, 
which is a slight generalization of the Kawamata--Viehweg vanishing theorem.  

\begin{lem}[Vanishing theorem of Reid--Fukuda type]\label{a-lem3.1}
Let $V$ be a smooth complex analytic surface, and let $\Delta$ be a boundary 
$\mathbb{R}$-divisor on $V$ such that $\Supp \Delta$ is a simple 
normal crossing divisor. 
Let $\varphi\colon V\to W$ be a projective surjective morphism 
of complex analytic spaces, and let $\mathcal{L}$ be a line bundle on $V$ such that 
$\mathcal{L}-(\omega_V+\Delta)$ is $\varphi$-nef and $\varphi$-big over $W$. 
Assume further that 
\[
\bigl(\mathcal{L}-(\omega_V+\Delta)\bigr)\cdot C>0
\]
for every irreducible component $C$ of $\lfloor \Delta\rfloor$ that is mapped to a point by $\varphi$.  
Then $R^i\varphi_*\mathcal{L}=0$ for every $i>0$. 
\end{lem}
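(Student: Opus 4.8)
The plan is to induct on the number $n$ of irreducible components of $\lfloor\Delta\rfloor$. When $n=0$ the extra positivity hypothesis is vacuous, and the assertion is exactly the relative Kawamata--Viehweg vanishing theorem (Theorem~\ref{a-thm2.4}) applied to $\varphi\colon V\to W$. So suppose $n\ge 1$ and that the lemma is known whenever the round-down of the boundary has fewer than $n$ components. Pick an irreducible component $S$ of $\lfloor\Delta\rfloor$; since $V$ is smooth and $\Supp\Delta$ is simple normal crossing, $S$ is a smooth curve and occurs in $\Delta$ with coefficient exactly $1$. I would then use the short exact sequence of sheaves on $V$
\[
0\longrightarrow \mathcal{L}\otimes\mathcal{O}_V(-S)\longrightarrow \mathcal{L}\longrightarrow \mathcal{L}|_S\longrightarrow 0,
\]
so that, after applying $R\varphi_*$, it suffices to prove $R^i\varphi_*\bigl(\mathcal{L}\otimes\mathcal{O}_V(-S)\bigr)=0$ and $R^i\varphi_*\bigl(\mathcal{L}|_S\bigr)=0$ for every $i>0$.

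For the first term, set $\Delta':=\Delta-S$. Then $\Delta'$ is again a boundary $\mathbb{R}$-divisor with simple normal crossing support, and $\lfloor\Delta'\rfloor=\lfloor\Delta\rfloor-S$ has $n-1$ components. The key identity
\[
\bigl(\mathcal{L}-S\bigr)-\bigl(\omega_V+\Delta'\bigr)=\mathcal{L}-(\omega_V+\Delta)
\]
shows that $\mathcal{L}\otimes\mathcal{O}_V(-S)$ together with $\Delta'$ satisfies all the hypotheses of the lemma: the difference is still $\varphi$-nef and $\varphi$-big over $W$, and for every component $C$ of $\lfloor\Delta'\rfloor$ contracted by $\varphi$ one has $\bigl((\mathcal{L}-S)-(\omega_V+\Delta')\bigr)\cdot C=\bigl(\mathcal{L}-(\omega_V+\Delta)\bigr)\cdot C>0$, since such $C$ is also a component of $\lfloor\Delta\rfloor$ contracted by $\varphi$. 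The induction hypothesis then yields $R^i\varphi_*\bigl(\mathcal{L}\otimes\mathcal{O}_V(-S)\bigr)=0$ for all $i>0$.

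For the second term, let $g\colon S\to\varphi(S)$ be the induced morphism; since $S\hookrightarrow V$ and $\varphi(S)\hookrightarrow W$ are closed immersions we have $R^i\varphi_*(\mathcal{L}|_S)=R^ig_*(\mathcal{L}|_S)$. If $\dim\varphi(S)=1$, then $g$ is projective with finite fibers, hence finite, so $R^ig_*=0$ for $i>0$. If $\varphi(S)$ is a point, then $S$ is a compact smooth curve and $R^ig_*(\mathcal{L}|_S)$ is the skyscraper sheaf with stalk $H^i(S,\mathcal{L}|_S)$, which vanishes for $i\ge 2$ for dimension reasons; for $i=1$ I would invoke adjunction, $\omega_S=(\omega_V+S)|_S$, to get
\[
\mathcal{L}|_S-\omega_S=\bigl(\mathcal{L}-(\omega_V+\Delta)\bigr)\big|_S+\Delta'|_S,
\]
and then note that $\deg\bigl(\mathcal{L}|_S-\omega_S\bigr)>0$ because $\bigl(\mathcal{L}-(\omega_V+\Delta)\bigr)\cdot S>0$ by the extra hypothesis (here $S$ is a component of $\lfloor\Delta\rfloor$ contracted by $\varphi$) while $\Delta'|_S$ is effective, as $S\not\subseteq\Supp\Delta'$. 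Serre duality on the smooth projective curve $S$ then forces $h^1(S,\mathcal{L}|_S)=h^0\bigl(S,\omega_S\otimes(\mathcal{L}|_S)^{-1}\bigr)=0$, since $\omega_S\otimes(\mathcal{L}|_S)^{-1}$ has negative degree. The long exact sequence of $R^\bullet\varphi_*$ now finishes the proof.

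The main point requiring care is the bookkeeping in the inductive step: one must check that the boundary condition, the simple normal crossing condition, $\varphi$-nefness, $\varphi$-bigness, and above all the Reid--Fukuda-type strict positivity all descend to the pair $(\Delta-S,\ \mathcal{L}\otimes\mathcal{O}_V(-S))$, which the displayed identity makes essentially automatic. The only other thing needing a word is the identification $R^i\varphi_*(\mathcal{L}|_S)=R^ig_*(\mathcal{L}|_S)$ together with the vanishing of the higher direct images of a finite morphism and the computation of $R^\bullet g_*$ via fiber cohomology in the complex analytic setting; all of this is standard.
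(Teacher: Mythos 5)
Your proof is correct and follows essentially the same route as the paper: induction on the number of irreducible components of $\lfloor\Delta\rfloor$ via the exact sequence $0\to\mathcal{L}\otimes\mathcal{O}_V(-S)\to\mathcal{L}\to\mathcal{L}|_S\to 0$ and the identity $\mathcal{L}(-S)-(\omega_V+\Delta-S)=\mathcal{L}-(\omega_V+\Delta)$, with the curve term handled by finiteness or by the degree computation and Serre duality (details the paper leaves implicit with ``since $C$ is a curve, it follows immediately''). The only point you omit is the paper's remark that, the statement being local over $W$, one may assume $\lfloor\Delta\rfloor$ has only finitely many irreducible components, which is needed in the complex analytic setting for the induction to make sense.
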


Lemma \ref{a-lem3.1} follows easily from the relative Kawamata--Viehweg 
vanishing theorem (Theorem \ref{a-thm2.4}), but we provide a proof here for completeness. 

\begin{proof}[Proof of Lemma \ref{a-lem3.1}]
If $\lfloor \Delta \rfloor = 0$, then $R^i\varphi_*\mathcal L = 0$ for every $i>0$ by the 
relative Kawamata--Viehweg vanishing theorem.  
Thus we may assume that $\lfloor \Delta\rfloor \ne 0$.  
Without loss of generality, we may further assume that $\lfloor \Delta\rfloor$ has only finitely many irreducible components, since the statement is local over $W$. 

Take an irreducible component $C$ of $\lfloor \Delta\rfloor$. 
Consider the short exact sequence
\[
0\to \mathcal L(-C)\to \mathcal L\to \mathcal L|_C\to 0,  
\]
where $\mathcal L(-C):=\mathcal L\otimes \mathcal O_V(-C)$. 
Note that
\[
\mathcal L(-C)-(\omega_V+\Delta-C)=\mathcal L-(\omega_V+\Delta). 
\]
By induction on the number of irreducible components of $\lfloor \Delta\rfloor$, 
we obtain $R^i\varphi_*\mathcal L(-C)=0$ for every $i>0$. 

Next, observe that
\[
\mathcal L|_C-\bigl(\omega_C+(\Delta-C)|_C\bigr)
=\bigl(\mathcal L-(\omega_V+\Delta)\bigr)|_C. 
\]
Since $C$ is a curve, it follows immediately that 
$R^i\varphi_*(\mathcal L|_C)=0$ for every $i>0$. 

Therefore, the long exact sequence associated to 
\[
0\to \mathcal L(-C)\to \mathcal L\to \mathcal L|_C\to 0
\] 
yields $R^i\varphi_*\mathcal L=0$ for all $i>0$.  
This completes the proof of Lemma \ref{a-lem3.1}. 
\end{proof}

The following lemma is a straightforward application of Lemma \ref{a-lem3.1}.  
Alternatively, one may use a more elementary argument, as in \cite[Theorem 11.1]{fujino4}, 
to prove Lemma \ref{a-lem3.2}. In any case, Lemma \ref{a-lem3.2} is easy to prove, 
but it is nevertheless quite useful.

\begin{lem}\label{a-lem3.2}
Let $f\colon X\to Y$ be a projective 
bimeromorphic morphism 
from a smooth complex analytic surface $X$, 
and let $\Delta$ be a boundary $\mathbb{R}$-divisor on $X$ 
such that $\Supp \Delta$ is a simple normal crossing divisor. 
Assume that $\mathcal{L}-(\omega_X+\Delta)$ is $f$-nef over $Y$. 

If $R^1 f_* \mathcal{L}\neq 0$, then the support of 
$R^1 f_* \mathcal{L}$ is contained in $f(B)$, 
where $B$ is the union of the irreducible components 
of $\lfloor \Delta\rfloor$ that are mapped to a point by $f$. 
In particular, $R^1 f_* \mathcal{L}$ is a skyscraper sheaf on $Y$. 

Moreover, $f_* \mathcal{L}$ is a torsion-free coherent sheaf on $Y$, and 
$R^i f_* \mathcal{L}=0$ for every $i\ge 2$. 
\end{lem}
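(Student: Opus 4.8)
The plan is to reduce everything to Lemma~\ref{a-lem3.1} by a careful choice of an auxiliary divisor that kills the floor where it matters but keeps track of the components of $\lfloor\Delta\rfloor$ that $f$ contracts. First I would observe that the statement is local over $Y$, so I may assume $Y$ is a (small) Stein space and that $\lfloor\Delta\rfloor$ has only finitely many irreducible components; I may also assume $R^1f_*\mathcal L\neq 0$, since otherwise there is nothing to prove for the first assertion. Write $\lfloor\Delta\rfloor = B + B'$, where $B$ is the union of the components contracted to points by $f$ and $B'$ collects the components $C$ with $f(C)$ a curve. For a point $y\in Y$ outside $f(B)$, I want to show $(R^1f_*\mathcal L)_y = 0$; shrinking $Y$ around $y$ I may assume $B=0$, i.e.\ every component of $\lfloor\Delta\rfloor$ dominates a curve in $Y$. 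In that situation a component $C$ of $\lfloor\Delta\rfloor$ is never contracted by $f$, so the hypothesis of Lemma~\ref{a-lem3.1} on components mapped to a point is vacuous, and since $\mathcal L-(\omega_X+\Delta)$ is $f$-nef and $f$ is bimeromorphic (hence $f$-big: the general fiber is a point and $\mathcal L-(\omega_X+\Delta)$ restricts to a big $\mathbb R$-divisor there trivially), Lemma~\ref{a-lem3.1} gives $R^if_*\mathcal L=0$ for all $i>0$ near $y$. This shows $\Supp R^1f_*\mathcal L\subseteq f(B)$. Since $f$ is bimeromorphic, $f(B)$ is a finite set of points, so $R^1f_*\mathcal L$ is a skyscraper sheaf.

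For the vanishing $R^if_*\mathcal L=0$ when $i\geq 2$: the fibers of a bimeromorphic morphism from a surface have dimension at most $1$, so by Grauert's direct image theorem together with the theorem on formal functions (or simply by the dimension bound on fibers, which forces the stalks of the higher direct images to vanish), $R^if_*\mathcal L=0$ for all $i\geq 2$ automatically. I would state this with a one-line reference to the fiber-dimension bound for bimeromorphic morphisms of surfaces.

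For the torsion-freeness of $f_*\mathcal L$: here I would use that $\mathcal L$ is a line bundle on the smooth surface $X$, hence $\mathcal L$ is itself torsion-free, and $f_*$ of a torsion-free sheaf under a bimeromorphic (in particular, with no component of $X$ contracted to lower-dimensional image except via the exceptional locus, which is nowhere dense) morphism is torsion-free: if a local section $s$ of $f_*\mathcal L$ were annihilated by a nonzero $g\in\mathcal O_Y$, then $\psi^*g\cdot s=0$ in $\mathcal L$ over the preimage, and since $\psi^*g$ is a nonzero (hence non-zero-divisor, as $X$ is reduced and irreducible over each component) section of $\mathcal O_X$ while $\mathcal L$ is locally free, we get $s=0$. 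I would phrase this as: $f_*\mathcal L$ embeds into $(f_*\mathcal L)\otimes\mathcal K_Y = f_*(\mathcal L\otimes\mathcal K_X)$ via pullback of meromorphic functions, and the latter has no torsion. Coherence of $f_*\mathcal L$ and of the $R^if_*\mathcal L$ is Grauert's coherence theorem for the proper (indeed projective bimeromorphic) morphism $f$.

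The main obstacle I anticipate is not any single step but making the localization argument for the support statement fully rigorous in the analytic category: one must be careful that ``shrinking $Y$ so that $B$ disappears'' is legitimate, i.e.\ that for $y\notin f(B)$ one can find a Stein neighborhood $U$ of $y$ with $f^{-1}(U)\cap B=\varnothing$, which uses properness of $f$ (so $f(B)$ is closed and analytic). Once that is in place, the reduction to Lemma~\ref{a-lem3.1} is immediate because the positivity hypothesis there becomes vacuous. A secondary point requiring a little care is checking $f$-bigness of $\mathcal L-(\omega_X+\Delta)$ in the sense of Definition~\ref{a-def2.3}: since $f$ is bimeromorphic, a sufficiently general fiber is a single reduced point, on which every $\mathbb R$-line bundle is (vacuously) big, so $f$-nefness alone suffices to invoke Lemma~\ref{a-lem3.1}.
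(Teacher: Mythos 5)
Your argument is correct, and it rests on the same two pillars as the paper's proof (Lemma~\ref{a-lem3.1} plus the observation that for a bimeromorphic $f$ the relative bigness of $\mathcal L-(\omega_X+\Delta)$ is automatic, so only the contracted components of $\lfloor\Delta\rfloor$ can obstruct the vanishing), but the mechanism differs. The paper argues globally: it twists by $-B$, uses the short exact sequence $0\to\mathcal L(-B)\to\mathcal L\to\mathcal L|_B\to 0$ together with $\mathcal L(-B)-(\omega_X+\Delta-B)=\mathcal L-(\omega_X+\Delta)$, applies Lemma~\ref{a-lem3.1} to $\mathcal L(-B)$ (the positivity hypothesis being vacuous for the boundary $\Delta-B$, whose floor has no $f$-contracted components), and deduces $R^if_*\mathcal L\simeq R^if_*(\mathcal L|_B)=H^i(B,\mathcal L|_B)$ for $i>0$, which is visibly supported in $f(B)$. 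You instead localize over $Y$ away from $f(B)$ and apply Lemma~\ref{a-lem3.1} directly to $\mathcal L$ with the now-vacuous hypothesis; your care about finding a neighborhood $U$ of $y\notin f(B)$ with $f^{-1}(U)\cap B=\varnothing$ (properness of $f$, $f(B)$ closed) is exactly the point that makes this legitimate, and one should also note that a non-contracted component of $\lfloor\Delta\rfloor$ stays non-contracted after restriction to $f^{-1}(U)$ since $f$ has finite fibers on it. What the paper's route buys is the explicit identification $R^if_*\mathcal L\simeq H^i(B,\mathcal L|_B)$ and the same exact sequence that is reused verbatim in the proof of Theorem~\ref{a-thm1.1}; what your route buys is a shorter derivation of the support statement, plus explicit (and correct) justifications of the ``moreover'' clauses --- vanishing of $R^if_*\mathcal L$ for $i\ge 2$ from the fiber-dimension bound and torsion-freeness of $f_*\mathcal L$ from local freeness of $\mathcal L$ --- which the paper leaves unproved as standard. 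One cosmetic caveat: in the torsion-freeness argument, write the sheaf of meromorphic functions with a symbol other than $\mathcal K_X$, which in this context reads as the canonical sheaf; the direct argument that $f^*g$ is a non-zero-divisor acting injectively on the locally free sheaf $\mathcal L$ is cleaner and already suffices.
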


\begin{proof}[Proof of Lemma \ref{a-lem3.2}]
Consider the short exact sequence 
\[
0\to \mathcal L(-B)\to \mathcal L\to \mathcal L|_B\to 0,  
\] 
where $\mathcal L(-B):= \mathcal L\otimes \mathcal O_X(-B)$. 
Note that 
\[
\mathcal L(-B)-(\omega_X+\Delta-B)=\mathcal L-(\omega_X+\Delta). 
\] 
Since $f$ is projective bimeromorphic, $\mathcal L-(\omega_X+\Delta)$ is $f$-nef 
and $f$-big over $Y$. 
By definition, $C\to f(C)$ is finite for any irreducible component $C$ of $\lfloor 
\Delta-B\rfloor$. 
Therefore, Lemma \ref{a-lem3.1} implies 
\[
R^i f_*\mathcal L(-B)=0 \quad \text{for every } i>0. 
\] 

The associated long exact sequence then gives
\[
R^i f_*\mathcal L \simeq R^i f_*(\mathcal L|_B) = H^i(B, \mathcal L|_B)
\]
for every $i>0$.  
Hence, the support of $R^1 f_*\mathcal{L}$ is contained in $f(B)$, as claimed. 
This completes the proof of Lemma \ref{a-lem3.2}. 
\end{proof}

\section{Proof of Theorem \ref{a-thm1.1}}\label{a-sec4}

In this section, we prove Theorem \ref{a-thm1.1}. 
Theorem \ref{a-thm1.1} follows directly from Lemmas \ref{a-lem3.1} and 
\ref{a-lem3.2}, and is thus a slight generalization of 
the relative Kawamata--Viehweg vanishing theorem (see Theorem \ref{a-thm2.4}). 

\begin{proof}[Proof of Theorem \ref{a-thm1.1}]
By Lemma \ref{a-lem3.2}, it suffices to show that 
$R^p\pi_*(f_*\mathcal L)=0$ for every $p>0$. 

Let $B$ be the union of the irreducible components of 
$\lfloor \Delta\rfloor$ that are mapped to a point by $f$, and consider 
\[
0\to \mathcal L(-B)\to \mathcal L\to \mathcal L|_B\to 0, 
\] 
where $\mathcal L(-B):=\mathcal L\otimes \mathcal O_X(-B)$. 

Since $\psi=\pi\circ f$ is projective, $f$ is projective. 
By Lemma \ref{a-lem3.1} and 
\[
\mathcal L(-B)-(\omega_X+\Delta-B)=\mathcal L-(\omega_X+\Delta), 
\] 
we have $R^i f_*(\mathcal L(-B))=0$ for every $i>0$. 

Thus, 
\[
0\to f_*\mathcal L(-B)\to f_*\mathcal L\to f_*(\mathcal L|_B)\to 0
\] 
is exact, and we obtain the long exact sequence
\[
\cdots \to R^p\pi_*(f_*\mathcal L(-B))\to 
R^p\pi_*(f_*\mathcal L)\to R^p\pi_*(f_*(\mathcal L|_B))\to \cdots. 
\] 

By the definition of $B$, $f_*(\mathcal L|_B)$ is either zero or a 
skyscraper sheaf. Therefore,
\[
R^p\pi_*(f_*(\mathcal L|_B))=0
\] 
for every $p>0$. It remains to show that 
\[
R^p\pi_*(f_*\mathcal L(-B))=0 
\] 
for every $p>0$. 

Since $R^i f_*(\mathcal L(-B))=0$ for every $i>0$, 
the Leray spectral sequence yields 
\[
R^p\pi_*(f_*\mathcal L(-B))=R^p\psi_* \mathcal L(-B) 
\] 
for every $p$. 

Applying the vanishing theorem of Reid--Fukuda type (Lemma \ref{a-lem3.1}) 
to $\psi$, we obtain $R^p\psi_*\mathcal L(-B)=0$ for every $p>0$. 
Hence, $R^p\pi_*(f_*\mathcal L(-B))=0$ for every $p>0$, as desired.  
This completes the proof of Theorem \ref{a-thm1.1}.  
\end{proof}

\section{On the minimal model theory for log surfaces}\label{a-sec5}

In this section, we explain, paper by paper, that 
the minimal model theory for log surfaces 
developed in \cite{fujino1}, \cite{fujino2}, \cite{fujino5}, 
and \cite{moriyama} does not require 
the intricate vanishing theorems traditionally used in the literature. 
Theorem~\ref{a-thm1.1} and Lemma~\ref{a-lem3.2} already provide 
all the ingredients necessary for those arguments.

\subsection{On the fundamental theorems established in \cite{fujino1}}
\label{a-subsec5.1}

The minimal model theory discussed in \cite{fujino1}  
depends on the torsion-free and vanishing theorem  
(see \cite[Theorem~6.3]{fujino1}).  
Let \( f \colon Y \to X \) be a projective birational morphism  
from a smooth algebraic surface \( Y \).  
In \cite[Theorem~6.3 (i)]{fujino1}, it is shown that  
the torsion-free property, which is now called 
the strict support condition, holds for \( f \).  
This is contained in Lemma~\ref{a-lem3.2} in the current paper.  
Moreover, \cite[Theorem~6.3 (ii)]{fujino1} easily follows from  
Theorem~\ref{a-thm1.1}.  
In \cite[Section~8]{fujino1}, the torsion-free and vanishing theorems  
are applied only in the case where  
\( f \colon Y \to X \) is birational.  
Hence, the results in \cite[Section~8]{fujino1} for surfaces  
can be established solely by using  
Theorem~\ref{a-thm1.1} and Lemma~\ref{a-lem3.2}.  
Thus, we recover \cite[Theorem~9.1]{fujino1} for surfaces.

\medskip

We note the following well-known elementary property.

\begin{lem}\label{a-lem5.1}
Let \( (X,\Delta) \) be a log canonical surface and  
let \( C \) be a minimal log canonical center of \( (X,\Delta) \).  
If \( \dim C = 1 \), then \( (X,\Delta) \) is purely log terminal  
in a neighborhood of \( C \).  
In particular, \( C \) is normal and \( (C,\Delta_C) \) is kawamata log terminal,  
where \( K_C + \Delta_C := (K_X + \Delta)|_C \) by adjunction.
\end{lem}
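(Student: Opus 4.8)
The plan is to prove Lemma~\ref{a-lem5.1} by a local analysis around the one-dimensional log canonical center $C$, combined with the classification of log canonical surface singularities.

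First I would work in an analytic (or étale) neighborhood $U$ of a general point $p\in C$, where $X$ is smooth (or has at worst a mild singularity) and $C$ is a smooth curve; here the claim that $(X,\Delta)$ is plt near $C$ amounts to checking that $\lfloor\Delta\rfloor = C$ near $p$ and that no extra log canonical center passes through $p$, which follows from $C$ being a \emph{minimal} lc center. The subtler points are the finitely many special points of $C$ --- the points where $C$ meets other components of $\Delta$, or where $X$ is singular. At such a point $q$, the key observation is that $C$ being a minimal lc center through $q$ forces $(X,\Delta)$ to be lc but not lc along any smaller-dimensional stratum at $q$; in particular $q$ is not itself an lc center. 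The plan is then to invoke the classification of two-dimensional log canonical pairs $(X,\Delta)$ with a prescribed curve $C$ of coefficient $1$ in $\Delta$: such a configuration, at a point that is not a $0$-dimensional lc center, is exactly the plt situation, where $X$ is $\mathbb{Q}$-factorial with a cyclic quotient singularity transverse to $C$, $C$ is smooth, and $(C,\Delta_C)$ is klt by adjunction with a standard-coefficient boundary.

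Concretely, the steps I would carry out are: (1) reduce to a local statement near each point of $C$; (2) at a general point of $C$, observe directly that $\lfloor\Delta\rfloor=C$ and there are no other lc centers, hence plt; (3) at a special point $q$, use minimality of $C$ to rule out $q$ being a $0$-dimensional lc center, so that $C$ is the unique lc center through $q$; (4) apply the classification of lc surface pairs (or run the relevant adjunction/inversion-of-adjunction argument for surfaces) to conclude that $(X,\Delta)$ is plt at $q$, that $X$ is $\mathbb{Q}$-factorial there, and that $C$ is smooth at $q$; (5) assemble these to get that $(X,\Delta)$ is plt in a neighborhood of $C$, that $C$ is normal, and then deduce klt-ness of $(C,\Delta_C)$ from plt adjunction (the different $\Delta_C$ has coefficients in $[0,1)$ precisely because $(X,\Delta)$ is plt along $C$).

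The main obstacle will be step~(4): handling the special points $q$ rigorously, i.e.\ showing that a minimal one-dimensional lc center forces the plt (transverse cyclic quotient) local picture rather than some worse lc configuration such as two curves of coefficient $1$ crossing, or $C$ passing through a non-$\mathbb{Q}$-factorial lc singularity. This is where one genuinely needs the structure theory of lc surface singularities (or inversion of adjunction in dimension two); everything else is essentially bookkeeping. Since this is a ``well-known elementary property,'' I expect the write-up to cite the surface classification (e.g.\ from \cite{fujino1} or standard references) rather than reprove it, and to spend its effort only on the reduction and on correctly invoking plt adjunction to get the klt conclusion for $(C,\Delta_C)$.
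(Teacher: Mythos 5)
Your plan would work, but it takes a genuinely different and heavier route than the paper. The paper's proof is one paragraph: since exceptional divisors over a surface have zero-dimensional centers, every divisor $E$ over $X$ with $a(E,X,\Delta)=-1$ other than $C$ has as its center either a point or another curve; if that center meets $C$, then by the intersection property of log canonical centers (\cite[Theorem~9.1\,(2)]{fujino1}, the intersection of two lc centers is a union of lc centers) one would obtain an lc center strictly contained in $C$, contradicting minimality. Hence no lc center other than $C$ meets $C$, so $(X,\Delta)$ is plt in a neighborhood of $C$, and normality of $C$ together with klt-ness of $(C,\Delta_C)$ follows from plt adjunction. You instead propose a pointwise local analysis along $C$ using the classification of lc surface singularities (or inversion of adjunction in dimension two) at the special points; this in effect reproves, in the surface case, exactly the lc-center intersection statement that the paper simply quotes, and in exchange it yields finer local information (the transverse cyclic quotient picture, $\mathbb{Q}$-factoriality, smoothness of $C$) that the lemma does not need. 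One logical ordering issue in your write-up: in step~(3), minimality of $C$ only rules out $q$ itself being a zero-dimensional lc center; it does not by itself give that $C$ is the \emph{unique} lc center through $q$, since a second coefficient-one curve through $q$ must first be shown to force a zero-dimensional lc center there --- this is precisely the content you defer to step~(4) (and you correctly flag it as the main obstacle), or, in the paper's approach, the content of \cite[Theorem~9.1\,(2)]{fujino1}. Given that the surrounding section of the paper already relies on \cite[Theorem~9.1]{fujino1}, citing it is the shorter and more natural path, but your classification-based argument is a valid, more self-contained alternative in dimension two.
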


We provide a proof of Lemma~\ref{a-lem5.1} for completeness.

\begin{proof}[Proof of Lemma~\ref{a-lem5.1}]
Assume that $\dim C = 1$.  
Since $C$ is a minimal log canonical center by assumption,  
every divisor $E$ over $X$ whose center on $X$ intersects $C$  
and is different from $C$ itself has discrepancy $a(E,X,\Delta) > -1$  
by \cite[Theorem~9.1\,(2)]{fujino1}.  
In particular, there is no divisor with discrepancy $-1$ whose center meets $C$  
other than the divisor $C$.  
Hence $(X,\Delta)$ is purely log terminal in a neighborhood of $C$.  

The remaining assertions follow immediately:  
since $(X,\Delta)$ is purely log terminal near $C$,  
by adjunction, the pair $(C,\Delta_C)$ is kawamata log terminal.  
This completes the proof.
\end{proof}

By Lemma~\ref{a-lem5.1}, when \( X \) is a surface,  
if \( W \) is a minimal log canonical center,  
then \( (W, B_W) \) is a kawamata log terminal curve,  
where \( K_W + B_W := (K_X + B)|_W \) by adjunction,  
or else \( W \) is a point.  
Thus, \cite[Theorem~11.1]{fujino1} is clear when \( X \) is a surface.  
In the proof of the non-vanishing theorem in \cite[Section~12]{fujino1},  
minimal log canonical centers that are disjoint  
from the non-lc locus are treated.  
By Lemma~\ref{a-lem5.1}, such a center is either a point or  
a kawamata log terminal curve when the ambient variety is a surface.  
Therefore, no complicated arguments are required  
when working with surfaces.  
Thus, we can verify that  
Theorem~\ref{a-thm1.1} and Lemma~\ref{a-lem3.2}  
are sufficient to establish the results of \cite{fujino1}  
for surfaces. 
We strongly recommend that the reader 
consult \cite[Section~2]{fujino1} for the 
appropriate use of the vanishing theorem for surfaces.

\subsection{On the minimal model theory for log surfaces in \cite{fujino2}}
\label{a-subsec5.2}

The Kodaira-type vanishing theorem stated in \cite[2.8]{fujino2}  
follows immediately from Theorem~\ref{a-thm1.1}.  
Note that the vanishing theorem in \cite[2.8]{fujino2}  
is used repeatedly in \cite[Section~4]{fujino2}.  
In \cite[Section~9]{fujino2}, which is an appendix,  
quasi-log structures on reducible curves are treated.  
We do not check here whether Theorem~\ref{a-thm1.1}  
and Lemma~\ref{a-lem3.2} are sufficient for the proof of  
\cite[Theorem~9.1]{fujino2}.  
We leave this as an exercise for the interested reader.  
Note that \cite[Theorem~9.1]{fujino2} is not necessary  
for the minimal model program and the abundance theorem  
for log surfaces (see \cite[Theorem~1.1]{fujino2}).

\subsection{On the cone and contraction theorem established in 
\cite{fujino5}}\label{a-subsec5.3}

In \cite{fujino5}, the cone and contraction theorem is established for 
projective morphisms between complex analytic spaces.  
Note that \cite{fujino5} is a complex analytic generalization of 
\cite{fujino1}.  
The vanishing theorem (see \cite[Theorem~3.1.5]{fujino5}),  
which is one of the main results of \cite{fujino6},  
can be recovered quickly from Theorem~\ref{a-thm1.1}  
and Lemma~\ref{a-lem3.2} when  
\( f \colon X \to Y \) is a bimeromorphic morphism  
from a smooth complex analytic surface \( X \).  
Therefore, \cite[Theorem~3.3.1]{fujino5} can be proved  
without difficulties when \( X \) is a surface.

For the proof of the non-vanishing theorem  
(see \cite[Theorem~4.1.1]{fujino5}),  
no complicated vanishing theorems are needed  
if we treat only surfaces.  
In particular, we do not need \cite[Lemma~4.1.3]{fujino5}.  
This is because any minimal log canonical center of a log canonical surface  
is either a point or a kawamata log terminal curve  
by Lemma~\ref{a-lem5.1}.  
Hence, in order to prove the results of \cite{fujino5}  
for surfaces,  
Theorem~\ref{a-thm1.1} and Lemma~\ref{a-lem3.2}  
are sufficient.

\subsection{On the minimal model theory for analytic log surfaces in \cite{moriyama}}
\label{a-subsec5.4}

In \cite{moriyama}, we utilize the vanishing 
theorem stated in \cite[Lemma 2.14]{moriyama}. 
It is not difficult to verify that the case 
where \( f \colon \widetilde{X} \to X \) is a 
bimeromorphic morphism from a smooth complex 
analytic surface \( \widetilde{X} \) is 
sufficient. In this case, \cite[Lemma 2.14]{moriyama} 
can be recovered from Theorem \ref{a-thm1.1}. Thus, 
we can avoid the more intricate vanishing results 
proved in \cite{fujino6} and \cite{fujino-fujisawa}. 

\begin{ack}
The first author was partially supported by JSPS KAKENHI Grant Numbers 
JP21H04994 and JP23K20787. 
The authors are grateful to Yohei Hada for carefully reading the manuscript 
and kindly pointing out an error in an earlier version of this paper.
\end{ack}


\end{document}